\newtheorem{theorem}{Theorem}
\newtheorem{example}{Example}
\newcommand{\lcm}{ {\rm lcm}}
\newtheorem{repeatthm@}{Theorem}
\newenvironment{repeatthm}[1]{%
    \def\therepeatthm@{\ref{#1}}
    \repeatthm@
}
{\endrepeatthm@}
\algnewcommand\algorithmicforeach{\textbf{for each}}
\title{Algorithms for Carmichael numbers }
\author{Andrew Shallue}
\author{Jonathan Webster}
\address[1]{Illinois Wesleyan University}
\address[2]{Butler University}
\begin{document}

\begin{abstract}
Our primary concern is the computational complexity of algorithms that find all Carmichael numbers less than some specified bound $B$.    We have three related results.  
First, we show \textsc{carmichaels} is in \textbf{\textsc{p}}, where only the run-time is conditioned on the ERH.  
Second, we state a heuristically optimal tabulation algorithm,  
which is the first asymptotic improvement to tabulation algorithms in the $50$ years since Swift first described the prime-by-prime approach \cite{swift}.  
Third, we implemented a related algorithm that tabulated $100$ times further while only doing about $5$ times the work of the prior tabulation.
We found $308,279,939$ Carmichael numbers less than $10^{24}$ and we provide some statistics on these numbers.  
\end{abstract}

\maketitle

\section{Introduction}

\textit{Carmichael numbers} are the composite numbers $n$ satisfying $b^n \equiv b \pmod{n}$ for every integer $b$.  
Fermat's little theorem says that the congruence must hold whenever $n$ is a prime and so these numbers might also be called \textit{absolute Fermat pseudoprimes}.  
In \cite{RDcarm}, Robert Carmichael proved that such numbers must be square-free and satisfy $(p-1) | (n-1)$ for every prime $p|n$.  
He also gave four examples including $561$, the smallest Carmichael number\footnote{
In 1899, Korselt had previously proven this result but gave no examples.  
In 1885, \v{S}imerka found examples but did not state how they were found.
}.  
Given their status as pseudoprimes and the many open conjectures regarding their behavior, finding examples of Carmichael numbers and  tabulating them has been an ongoing project for the last $50$ years.  

In 1975, J.D. Swift described an algorithm used to find all Carmichael numbers up to $10^{9}$ \cite{swift}. 
Every tabulation thereafter followed the same basic approach.  
The improvements offered by Jaeschke \cite{jaeschke_carm}, Pinch \cite{pinch15}, and Shallue \& Webster \cite{sw_ants2, sw_ants3} all involved faster ways of handling a small set of inputs. 
Thus while the tabulation bound was pushed (eventually to $10^{22}$), the dominating asymptotic for all these approaches remained Kn\"odel's bound, generated algorithmically by constructing Carmichael numbers prime by prime.

\begin{theorem}[\cite{knodel}]\label{thm:input_count}
Let $C(B)$ be the count of Carmichael numbers less than $B$.  Then
\[ C(B) < B\exp\left\{\left(-\frac{1}{\sqrt{2}} +o(1)\right)\sqrt{\log B \log_2  B}\right\}. \]
Let $\mathcal{K}(B)$ denote this bound.
\end{theorem}
\noindent In the above, $\log_2 n$ denotes the twice-iterated natural logarithm, and $\log_k n$ will denote the $k$-fold iterated natural logarithm. 

Shortly after Kn\"odel proved his result, Erd\H{o}s found a better upper bound \cite{erdos_CN}.  
We state an improved version proven by Pomerance, Selfridge, and Wagstaff.

\begin{theorem}[{\cite[Theorem 6]{pom_self}}] \label{thm:CN_count}
For each $\epsilon > 0$, there is a $B_0(\epsilon)$ such that for all $B > B_0(\epsilon)$, we have 
\[C(B) \leq  B \exp \left\{ \frac{-(1-\epsilon)\log B \log_3 B}{ \log_2 B }\right\}.\]
Let $\mathcal{E}(B)$ denote this bound.   
\end{theorem}

In \cite{dist_psp, pom_self}, a heuristic argument is given that $C(B) \sim \mathcal{E}(B)$.  
However, the best proven lower bounds are of the form $B^\alpha$ for some fixed $\alpha$.  
Following the seminal work of \cite{inf_carm}, where the infinitude of Carmichael numbers was first proven with $\alpha = 2/7$, we have a series of refinements improving the exponent to $0.332$ \cite{harman1}, then $0.33336704$ \cite{harman2}, and finally $0.3389$ \cite{lichtman}.  
Those strictly guided by empirical observations may find the lower bounds more convincing than the heuristic argument because at our current tabulation range the largest exponent observed is $0.35$.  

The major result of the present work is an algorithm with $\mathcal{E}(B)$ as its asymptotic running time.
This supersedes all previous work on Carmichael number tabulation, giving the first asymptotic improvement since Swift first described his approach.

\begin{theorem}\label{thm:optimal}
For each $\epsilon > 0$, there is an $B_0(\epsilon)$ such that for all $B > B_0(\epsilon)$, there is an algorithm that can find all Carmichael numbers less than $B$ in at most $\mathcal{E}(B)$ 
queries of ``Is $n$ a Carmichael number?''.  
\end{theorem}

Thus the computational complexity of answering the query ``is $n$ a Carmichael number?'', which we denote as \textsc{\textsc{carmichaels}}, is of central importance to the present work.
This query is also of independent interest; it is a bit of folklore that Carmichael numbers are easy to factor based on the belief that most Carmichael numbers are smooth.
Since some Carmichael numbers are difficult to factor, factoring seems unavoidable, and  
general factorization algorithms are not in \textbf{\textsc{p}}, the following theorem is noteworthy.

\begin{theorem}\label{thm:iscarmichael}
\textsc{carmichaels} is in \textbf{\textsc{p}}, where correctness is unconditional and the run-time analysis is conditioned on the Extended Riemann Hypothesis (ERH). 
\end{theorem}

Theorems \ref{thm:optimal} and \ref{thm:iscarmichael} together justify the asymptotic running time of $\mathcal{E}(B)$.  
If the heuristic argument of \cite{dist_psp, pom_self} is correct, then we can further conclude that our algorithm is optimal. 

A pleasing feature of the present work is that the Fermat definition of Carmichael numbers plays a central role, where other tabulations ignore it in favor of Korselt's criterion.  
Swift wrote, 
``The computer programs used depended explicitly on congruence properties of the CN’s with respect to their component primes rather than on the pseudoprimality with respect to any particular base.'' 

In \cite[Theorem 8]{sw_ants3} we proved that $\mathcal{K}(B)$ is algorithmically realized.
The primary algorithmic advance in the present work is our use of Theorem \ref{thm:iscarmichael} to search for composite $R$ that completes a preproduct $P$ to form Carmichael numbers of the form $n = PR$.  
This provides an algorithmic realization of $\mathcal{E}(B)$.
Relaxing the requirement for completions to be prime is the source of the separation between $\mathcal{K}(B)$ and $\mathcal{E}(B)$.

In the next section we establish notations and state some elementary theorems used in tabulation algorithms. 
In Section \ref{sec_carinp} we state the central algorithm to prove the results on \textsc{carmichaels}.  
Since we claim that our new tabulation algorithms supersedes prior work,   
we use Section \ref{sec:survey} to provide a historical survey of the prime-by-prime algorithms.
The section concludes with a heuristic argument that they all had $\mathcal{K}(B)$ as the run-time.  
Section \ref{sec:faster1} gives two asymptotically faster algorithms.
The first of which is a practical algorithm that was implemented and the second is a heuristically optimal algorithm.
Section \ref{sec:the_tab} presents results of the computation and the tabulation of Carmichael numbers up to $10^{24}$.
We conclude by presenting some related problems.  

\section{Number theoretic results on Carmichael numbers}\label{sec:elementary}

We frequently recognize Carmichael numbers $n$ via their prime factorization.

\begin{theorem}[Korselt's Criterion]
A composite number $n$ is a Carmichael number if and only if $n$ is squarefree and $(p-1) \mid (n-1)$ for all prime divisors $p$ of $n$. 
\end{theorem}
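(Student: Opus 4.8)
The plan is to prove that, for composite $n$, the three conditions are equivalent: (i) $a^n \equiv a \pmod{n}$ for every $a \in \Z/n\Z$; (ii) $n$ is squarefree and $(p-1) \mid (n-1)$ for every prime $p \mid n$; and (iii) $\lambda(n) \mid (n-1)$. The central tool is the Chinese Remainder Theorem: if $n = \prod_i p_i^{r_i}$, then $\Z/n\Z \cong \prod_i \Z/p_i^{r_i}\Z$ as rings, so condition (i) holds for all $a$ if and only if $a^n \equiv a \pmod{p_i^{r_i}}$ holds for all $a$ and each $i$. This localizes the entire problem to prime-power moduli, where I can exploit the structure of the unit group.

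First I would show that (i) forces $n$ to be squarefree. If $p^2 \mid n$, take $a = p$: since $n \geq 2$ we have $p^n \equiv 0 \pmod{p^2}$, whereas $p \not\equiv 0 \pmod{p^2}$, so $p^n \equiv p \pmod{p^2}$ fails and (i) cannot hold. Hence every $r_i = 1$. With $n = p_1 \cdots p_k$ squarefree, the Chinese Remainder Theorem reduces (i) to requiring $a^n \equiv a \pmod{p}$ for all $a$ and each prime $p \mid n$. When $p \mid a$ this is automatic; when $a$ is invertible modulo $p$ it is equivalent to $a^{n-1} \equiv 1 \pmod{p}$. Because $(\Z/p\Z)^\times$ is cyclic of order $p-1$, fixing a generator $g$ shows that this holds for every unit precisely when the order $p-1$ of $g$ divides the exponent $n-1$, i.e. when $(p-1) \mid (n-1)$. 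Assembling these equivalences over the primes dividing $n$ gives (i) $\iff$ (ii).

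For (ii) $\iff$ (iii), note that on a squarefree $n = p_1 \cdots p_k$ the definition of $\lambda$ yields $\lambda(n) = \lcm(p_1 - 1, \dots, p_k - 1)$, and an lcm divides $n-1$ exactly when each of its arguments does; thus under the squarefree hypothesis (ii) and (iii) coincide. To see that (iii) may be stated with no separate squarefreeness assumption, I would observe that $\lambda(n) \mid (n-1)$ already forces squarefreeness: if $p^2 \mid n$ then $p \mid \lambda(p^2) \mid \lambda(n) \mid (n-1)$, contradicting $p \mid n$. The steps I expect to demand the most care are the two that rest on the group-theoretic structure of $(\Z/p^r\Z)^\times$ rather than on formal rewriting, namely the primitive-root reduction that converts ``$a^{n-1} \equiv 1$ for all units'' into the divisibility $(p-1)\mid(n-1)$, and the squarefree-forcing arguments; the remainder is bookkeeping carried out through the Chinese Remainder Theorem.
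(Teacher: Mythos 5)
Your proof is correct. The paper states Korselt's Criterion as classical background and provides no proof of its own, so there is no internal argument to compare against; your proof is the standard complete one: CRT localization to prime-power moduli, forcing squarefreeness by testing $a = p$ when $p^2 \mid n$, the primitive-root reduction of ``$a^{n-1} \equiv 1 \pmod{p}$ for all units'' to $(p-1) \mid (n-1)$, and the identity $\lambda(n) = \lcm(p_1 - 1, \dots, p_k - 1)$ on squarefree $n$. One point worth highlighting as a genuine contribution of your write-up: the paper's second sentence (``Equivalently, $n$ is Carmichael iff $\lambda(n) \mid (n-1)$'') silently drops the squarefreeness hypothesis, and your observation that $\lambda(n) \mid (n-1)$ by itself forces squarefreeness (via $p \mid \lambda(p^2) \mid \lambda(n) \mid n-1$ contradicting $p \mid n$) is exactly what makes that restatement legitimate rather than an oversight.
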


Robert Carmichael proved Korselt's criterion in the context of the reduced totient function or the Carmichael function $\lambda(n)$, 
\begin{align*}
\lambda\left(p^{\alpha}\right) &= \begin{cases} \phi\left(p^{\alpha}\right) &\text{ if } \alpha \leq 2 \text{ or } p \geq 3 \\ \frac{1}{2} \phi\left(p^{\alpha}\right) &\text{ if } \alpha \geq 3 \text{ and } p = 2 \end{cases},
\end{align*}
\[ \lambda(n) = \lambda\left( {p_1}^{\alpha_1} \cdots {p_k}^{\alpha_k} \right) = \lcm \, \big(\lambda\left({p_1}^{\alpha_1}\right), \ldots, \lambda\left({p_k}^{\alpha_k}\right)\big), \]
where the $p_i$ are distinct prime divisors of $n$ and $\phi(n)$ is Euler's totient function.  We can now state Korselt's criterion in terms of $\lambda(n)$.

\begin{theorem}[Korselt's Criterion]
A composite number $n$ is a Carmichael number if and only if $\lambda(n) | (n-1)$. 
\end{theorem}

We reserve the following notation.
\begin{itemize}
\item  Let $P_k = \prod_{i = 1}^{k} p_i$ where $p_i < p_j$ iff $i < j$ and call $P_k$ a \textit{preproduct}.
\item  We drop the subscript $k$ from $P_k$ when the context does not require a specific count of primes.  
\item We let $p$ be the greatest prime factor of $P$.  
\item We will use $q$ or $r$ to denote other primes.  
\end{itemize}
   
\begin{theorem}[{\cite[Proposition 1]{pinch15}}] \label{thm:bounds_theorem}
    Let $n = P_d < B$ be a Carmichael number.
 
    \begin{enumerate}

        \item \label{first_bullet}Let $k < d$.  Then $p_{k+1} < (B/P_{k})^{1/(d-k)}$ and $p_{k+1} - 1$ is relatively prime to $p_i$ for all $ i \leq k$.  

        \item \label{second_bullet} $P_{d-1}p_d \equiv 1 \pmod{\lambda(P_{d-1})}$ and $p_d-1$ divides $P_{d-1} - 1$.

        \item \label{third_bullet} Each $p_i$ satisfies $p_i < \sqrt{n} < \sqrt{B}$.  
    \end{enumerate}
\end{theorem}

Part \ref{second_bullet} of Theorem \ref{thm:bounds_theorem} can be generalized: we have $ PR \equiv 1 \pmod{\lambda(P)}$ for arbitrary positive $R$ whenever $n$ is Carmichael and $n = PR$.  
 The quantity $P^{-1} \pmod{ \lambda(P)}$ is used frequently throughout and we let $r^{\star}$ denote the least positive integer in that residue class.  
 This might be considered an unconventional naming choice but it is due to the fact that we associate this quantity to $R$, the remaining factor to be constructed.  So, $R$ is in the arithmetic progression 
 \[\mathcal{R} = \{r^\star , r^\star + \lambda(P), r^\star + 2\lambda(P), \ldots, r^\star + k\lambda(P), \ldots \}. \]

 Products $P$ where $p_{j} -1$ is relatively prime to all $p_i$ for $i < j$ are called \textit{cyclic}.  If there exists a prime $q$ and a cyclic $P$ such that $Pq$ is also cyclic we say that $q$ is \textit{admissible} to $P$.

Part \ref{second_bullet} of Theorem \ref{thm:bounds_theorem} also implies that the number of Carmichael numbers with a fixed second largest prime factor is finite.  We have a stronger result that implies that the number of Carmichael numbers with a fixed third largest prime factor is also finite.  

\begin{theorem}  \label{thm:CD_theorem}
Let $n = Pqr$ with $p < q < r$ be a Carmichael number.  
            There are integers $2 \leq D < P < C$ such that, putting $\Delta = CD - P^2$, we have
            \[ q = \frac{(P-1)(P+D)}{\Delta} + 1, \quad  r = \frac{(P-1)(P+C)}{\Delta} + 1,\quad    P^2 < CD < P^2\left( \frac{p + 3}{p + 1} \right). \]     
\end{theorem}

A version of Theorem \ref{thm:CD_theorem} was proved by Beeger for $P$ prime \cite{beeger}, and 
Duparc generalized it to composite $P$ \cite{duparc}.  
Jaeschke seems to be the first to incorporate an algorithmic version \cite{jaeschke_carm}.  
The version above matches Pinch's use in \cite{pinch15}.

\section{\textsc{carmichaels} is in \textbf{\textsc{p}}, conditionally}\label{sec_carinp}

It had long been conjectured that \textsc{primes} was in \textbf{\textsc{p}} and the groundbreaking work of AKS established it \cite{AKS}.  
In a similar way, we address the computational complexity of \textsc{carmichaels}. 

The chief difficulty lies in answering ``yes'' to this query.   
We present the prior work on this problem, and show how answering ``yes'' is related to integer factorization. 
We give a brief review of Miller's primality test; it is motivational and necessary for our own approach. 
This leads to a simplistic Miller-inspired way of resolving the query.   
We present a modification to this method that gives the complete prime factorization of Carmichael numbers, which means the algorithm will be unconditionally correct.  
This will demonstrate that \textsc{carmichaels} is in \textbf{\textsc{p}} where only the run-time is conditioned on the ERH.
We conclude with a note on the bit complexity of \textsc{carmichaels}.
 
\subsection{Prior work}\label{subsec:query_is_hard}

In 1978, the authors of \cite{pom_self} said that it was ``usually much harder to show that a given large number is Carmichael than it is to show that it is a [base $b$ Fermat pseudoprime].''  
They noted the obvious approach of factoring $n$ to invoke Korselt's criterion and that $n$ may be hard to factor.  
Then they proposed an alternative way where divisors of $n-1$ are used to construct potential $p-1$ and thereby find factors of $n$.  
At first glance, the computational complexity of \textsc{carmichaels} is that of integer factorization (of either $n$ or $n-1$), which we account as a difficult problem. 

\subsection{The strong Fermat test} \label{subsec:primer_primes}

Miller used the following theorem to design a reliable primality test \cite{miller}.
\begin{theorem}[{\cite[Theorem 3.5.1]{cranpom}}] \label{thm:strong_fermat}  
Suppose $n$ is an odd prime and $n - 1 = 2^sn' $ where $n'$ is odd.  
If $b$ shares no common factors with $n$ then either $b^{n'} \equiv 1 \pmod{n}$, or there exists $i$ such  that $0 \leq i < s$ and $b^{2^in'} \equiv -1 \pmod{n}$.
\end{theorem}
\noindent Under the ERH, a witness for compositeness will be found for some $b < 2 \log^2 n$ \cite[Theorem 3.5.12]{cranpom}. This almost establishes \textsc{primes} is in \textbf{\textsc{p}}; correctness and runtime are contingent on the ERH.  

\subsection{\textsc{carmichaels} is in \textbf{\textsc{p}}, conditionally  } \label{subsec:car_p_erh}

We follow Miller's approach to establish that \textsc{carmichaels} is in \textbf{\textsc{p}}: use the Fermat test for every $b \in [2, 2\log^2 n)$.  
Failure to witness that $n$ is composite implies it is a Carmichael number\footnote{
Or $n$ could be an absolute pseudoCarmichael; that is, it could be prime.  
We assume that these algorithms are invoked on composite integers, and that composite inputs are guaranteed by some other means.}.  
The correctness and run-time are conditioned on the ERH.  

It is unfortunate that this does not provide the prime factors of $n$.
Providing prime factors of $n$ would be akin to a primality certificate;
verification could be done without re-doing all of the tests.
Since there are open questions about the prime factors of Carmichael numbers, 
an algorithm should provide them.  
If incorporated into a tabulation algorithm, historic precedence requires providing these factors. 
So, we provide an algorithm to do that.
This will give us an algorithm that is unconditionally correct, 
which is a greatly desired feature if used in a tabulation algorithm.   

\subsection{Factoring Carmichael numbers}\label{subsec:car_zpp}

It has long been known that it is simple to split $n$ if the Fermat test fails to detect $n$ is composite and the strong Fermat test detects $n$ is composite \cite{pom_goutier, monier}.  
Therefore, Carmichael numbers are easily split. 
To provide a complete prime factorization, splitting a number is often sufficient. 
We recursively invoke the algorithm on the smaller factors until we have the complete prime factorization.  
The Fermat splitting cannot proceed in this way; 
it is unlikely that a composite divisor will be a Fermat pseudoprime.
  
For the rest of the subsection, we assume that $n$ is a Carmichael number and we wish to provide a proof of that fact by exhibiting its prime factorization.  
We give a high-level description of how the splittings witness all possible factorizations, state the algorithm, and finally give a proof of the claims.  
In \cite{probpoly}, it is noted that the splitting procedure may not given a complete prime factorization and we show it can for Carmichael numbers.  

We can better understand how the splitting occurs via the (repeated) difference of squares factorization of $b^{n-1} - 1$. 
Let $n'$ be the largest odd divisor of $n-1$ and $s = \nu_2 (n-1)$.  Then 
\[ b^{n-1} - 1 = (b^{n'} - 1)\prod_{i = 0}^{s-1}(b^{2^i n'} + 1 ). \]
If $n$ divides the left side of the equation (Fermat test does not detect $n$ is composite), 
then its prime factors are found in the algebraic factors on the right side.  
When $n$ is a strong pseudoprime, all prime factors of $n$ align in a single algebraic factor.  
Otherwise, the prime factors are distributed in a predictable way across two or more factors.
Here is a visual representation of the $s = 3$ case.  

\[
  b^{8n'} - 1  = \underbrace{(b^{4n'} + 1)\overbrace{(b^{2n'} + 1)\underbrace{(b^{n'} + 1)(b^{n'} - 1)}_\text{$\nu_2(q-1) = 1$}}^{\text{$\nu_2(q-1) = 2$}}}_{\text{$\nu_2(q-1) = 3$}}
\]
If $n$ is a Carmichael number,  then $\nu_2(q-1) \leq 3$ for all $q|n$.   Fermat's little theorem says $b^{q-1} \equiv 1 \pmod{q}$ and the braces are showing where the divisibility occurs by the $2$-valuation of $q-1$.  
The divisibility will be in the leftmost factor whenever $b$ is not a square mod $q$.

We compute $\gcd( n, b^{2^i n'} + 1)$ for each $0 \leq i < s$ to split these prime factors from others.  
These gcd computations correspond to the exponentiations required for the strong Fermat test, 
and allows multiple non-trivial factors of $n$ to be found with a single $b$.
Previous literature on this problem (e.g., \cite{pom_goutier, monier}) focused only on the 
smallest valuation which was guaranteed to witness a splitting.

We have to keep track of the information obtained from the splittings.  
A high-level description of how to answer the query ``Is $n$ a Carmichael number?'' is that we iterate over different choices of $b$.  For each $b$ we will likely either  
\begin{enumerate}
    \item witness $n$ is composite with the weak test and answer ``no'', or
    \item progress towards a complete prime factorization with the strong test.
\end{enumerate}
We appeal to Korselt's criterion when we find a complete prime factorization of $n$.  More robustly, we have:  

\begin{algorithm}[H]
\caption{Returns the truth value of ``$n$ is a Carmichael number.'' }
\begin{algorithmic}[1]
\Require $n$, a positive composite integer
\State Initialize empty vectors \textsc{primes}, \textsc{next\_comp}, and \textsc{cur\_comp}.
\State Put $n$ into \textsc{cur\_comp}.
\While{\textsc{cur\_comp} is not empty}
    \State Choose $b$ as consecutive primes in $ [2, n-2]$ 
    \State Compute $X[i] \equiv b^{2^i n'} \pmod{n} $ for $0 \leq i < s$
    \State \textbf{if} $b^n \not \equiv b \pmod{n}$ \Return False 
    \ForEach{ $m$ in \textsc{cur\_comp} }
        \For { $i < s$}
            \State Calculate $g = \gcd(m, X[i] + 1)$
            \State $m = m/g$
            \State \textbf{if} $1 < g $ \textbf{then} put $g$ into \textsc{next\_comp} or \textsc{primes} 
        \EndFor
        \State \textbf{if} $1 < m $ \textbf{then} put $m$ into \textsc{next\_comp} or \textsc{primes} 
    \EndFor
    \State Swap \textsc{next\_comp} and \textsc{cur\_comp} 
\EndWhile
\State \Return  $\lambda(n) | (n-1)$  
\end{algorithmic}
\label{alg:car_in_p_cond}
\end{algorithm}

\textbf{Remarks:}
\begin{itemize}
\item Lines $5$-$6$ are all part of one square-and-multiply exponentiation ladder that represent the strong and weak Fermat tests.  
\item Lines $11$ and $13$ are, perhaps, understated.  
Putting these values in the vectors requires a primality check.  
Any time a composite check is done, we could also check that the number is not a nontrivial power, returning ``False'' if detected.  
Letting $P$ be the product of the known prime factors, we could also return ``False'' if $P$ is not cyclic or if $\lambda(P)\nmid (n-1)$.  
If this is done as part of lines $11$ and $13$, then line $17$ could read ``\textbf{return} True''.   
\item If prime or composite factors of $n$ are already known, initialize \textsc{primes} and \textsc{cur\_comp} with the respective factors.  
\end{itemize}


\begin{example}
Consider $n =349407515342287435050603204719587201$,  the least Carmichael number with $20$ prime factors.
A number enclosed with parentheses denotes a composite and these numbers are in \textsc{cur\_comp} for the next choice of $b$.  
\begin{itemize}
    \item $b = 2$: \ $ n = 17 \cdot  97 \cdot 193 \cdot 641 \cdot (24261623)(8987) (7855279748239573). $ 
    \item $b = 3$: \ $24261623 = 71 \cdot 73 \cdot (4681)$, \  $8987 = 11 \cdot (817)$,  $7855279748239573 = 41 \cdot 113 \cdot (256477)(2257) (2929)$.
    \item $b = 5$: \ $4681 = (4681)$,  \  $817 = 19 \cdot 43$, \  $256477 = 13 \cdot (19729) $, \  $2257= 37 \cdot 61$, \ $2929 = 29 \cdot 101$.
    \item $b = 7$: \  $4681 = 31 \cdot 151$, \  $19729 = 109 \cdot 181$.
\end{itemize}
We have the complete prime factorization of $n$ and can verify that it is a Carmichael number with Korselt's criterion.  
\end{example}

\begin{repeatthm}{thm:iscarmichael}
\textsc{carmichaels} is in \textbf{\textsc{p}}, where correctness is unconditional and the run-time analysis is conditioned on the Extended Riemann Hypothesis (ERH). 
\end{repeatthm}


\begin{proof}
Algorithm \ref{alg:car_in_p_cond} is unconditionally correct; lines $6$ and $17$ exit with no error. 
We need to show that it finishes in the required time constraint. 
Both cases use Theorem 1.4.5 of \cite{cranpom}.

Under the ERH and for composite $n$ that are not Carmichael numbers, a witness for compositeness is found (see line $6$) for some $b <  2\log^2 n$.  

We now establish that under the ERH and for $n$ that are Carmichael numbers, Algorithm \ref{alg:car_in_p_cond} finds the complete prime factorization from a set of $b < 2\log^2 n$.
It is sufficient to show that any two distinct primes may be split from each other (see line $9$);  
consider $m= q_1q_2$ a product of two distinct primes, a composite divisor of $n$ with $v_2(q_i - 1) = s_i \leq v_2(n-1)$ for $i = 1,2$.   

Case 1: When $s_1 = s_2$, the ERH guarantees the existence of a $b$ that is a square mod $q_1$ but not a square mod $q_2$ (or vice-versa) in the interval $[2, 2\log^2 q_1q_2)$.    

Case 2: If $s_1 < s_2$, it is sufficient to find $b$ that is not a square mod $q_2$.  The ERH says this will happen for some $b$ in the interval $[2, 2\log^2 q_2)$. 
\end{proof}

What is the contrast between using Miller's test to address \textsc{primes} and using Algorithm \ref{alg:car_in_p_cond} to address \textsc{carmichaels}?  
Correctness.     
For Miller's test, a ``yes'' answer is contingent on the ERH.
For Algorithm \ref{alg:car_in_p_cond}, if the ERH is wrong it only means that we wait longer than we had expected.  

One can re-do all of Subsections \ref{subsec:primer_primes}-\ref{subsec:car_zpp}, and follow a Rabin-inspired approach to primality testing where $b$ are selected uniformly at random \cite{rabin}.  Everything follows through similarly except counting arguments (see \cite{pom_goutier, monier}) take the place of appealing to the ERH.  The end result would be the following theorem.

\begin{theorem}
\textsc{carmichaels} is in \textbf{\textsc{zpp}} (zero-error probabilistic polynomial time).  
\end{theorem}

The interested reader may find the details in {\cite[Theorem 3.9]{webster_CN}}.

\subsection{Bit complexity of \textsc{carmichaels}}
The average case corresponds to detecting $n$ is composite with the first base-$2$ Fermat test.  
So, we sketch a bound on the worst case.  
There is a factor of $\log^2 n$ coming from the number of Fermat bases.   
We can bound the loop in line $7$ by a factor of $\log n$ because that bounds the number of factors that can product to $n$.
We can bound the loop in line $8$ by a factor of $\log n$ because $s$ is bounded by the bit-length of $n$.
The largest cost per iteration of the while loop occurs in lines $11$ and $13$: prime testing required to place $g$ and $m$ in their respective vectors.  
The Lenstra-Pomerance variant \cite{LP} of AKS \cite{AKS}  accomplishes this in time $O(\log^6n \log_2^{c_o}n)$ for some effectively computable constant $c_o$.    
This gives a bit-complexity of  $O(\log^{10}n \log_2^{c_0}n)$.  

We doubt this result is sharp and we list three reasons.
First, it is likely that primality proofs can be delayed until the end;  it is easier to detect a number is composite than it is to provide a primality proof.  
In this case, the total number of primality tests would be proportional to the number of prime factors of $n$ rather than the number of times the while-loop executes.
Also, if we keep track of $\lambda(P)$ as it is constructed, it might be possible to incorporate the asymptotically cheaper BLS primality test \cite{bls} because we would know factored portions of $p-1$.
Second, the bound on line $7$ uses the worst case;  it is difficult for this worst case to be achieved and sustained.
Third, letting $s'$ be the bit-length of $n$, the bound on line $8$ is only rarely close to $s'$.   
The worst case can be improved using $p < \sqrt{n}$ implies $i < \min\{ s, s'/2\}$.  
More generally, as we learn prime factors of $n$, better estimates of $\max_{p|m}\{v_2(p-1)\}$ can be used, where $m$ corresponds to the remaining unfactored composite factors of $n$.  

\section{Algorithms akin to Kn\"odel's Argument - A survey }\label{sec:survey}

The first subsection provides a brief sketch of the two main algorithmic pieces used in prior tabulations.  
Subsection \ref{subsec:survey} surveys prior work that advanced the algorithmic theory of Carmichael tabulation.  
Subsection \ref{subsec:10_22} summarizes \cite{sw_ants3} to provide a unifying understanding of all prior tabulation algorithms.

\subsection{Algorithmic Basics}\label{subsec:basic_algs}

All of the below use Theorems \ref{thm:bounds_theorem} and \ref{thm:CD_theorem} in an algorithmic way.  
In \cite{sw_ants2, sw_ants3}, we gave two classes of algorithms that find $R$ so that $n = PR$ is a Carmichael number.  

In \cite{sw_ants2}, the focus was finding $R$ with exactly two prime factors and we showed they could be found in time almost linear in $P$.  
The key idea was to use Theorem \ref{thm:CD_theorem}.
Jaeschke and Pinch \cite{jaeschke_carm, pinch15} iterate over the quantities $D$ and $C$ to find $R= qr$ and this was effective for $P$ prime. 
In \cite{sw_ants2}, we showed that you can iterate over the quantities $D$ and $\Delta$ to handle composite $P$ nearly as efficiently. 
We also provided asymptotic analysis of these algorithms and argued that if $P < B^{1/3}$, it was better to invoke this algorithm than perform an exhaustive search for the two primes.

In \cite{sw_ants3}, the focus was finding $R$ as a prime.  
The key insight is that Part \ref{second_bullet} of Theorem \ref{thm:bounds_theorem} is a ``divisor in residue class'' problem and there are efficient algorithms that solve this problem \cite{div_in_class, div_in_class_con}.  
We showed that the set of cases which are difficult correspond to $P$ being small with respect to $B$, $\lambda(P)$ being small compared to $P$, or $P-1$ being highly divisible by $\lambda(P)$.  
We do not expect many of these difficult cases and the timing information at the end of \cite{sw_ants3} confirmed that the improvements had negligible impact.  
We refer to the class of algorithms that can find $R$ prime as \textbf{SPC} (single prime completion) algorithms.  
A well-written \textbf{SPC} algorithm should be no slower than checking the arithmetic progression $\mathcal{R}$ for primes.

\subsection{The Survey}\label{subsec:survey}

We do not think that any of the authors were inspired by Kn\"odel's counting argument for Carmichael numbers.  
For Carmichael numbers with $d$ prime factors, prime-by-prime completion methods will create $P_{d-1} = P_{d-2}p_{d-1}$.  
This cyclic number satisfies $P_{d-2}p_{d-1}^2 < B$ so that there is enough room for one more prime.  
The count of these numbers matches $\mathcal{K}(B)$ {\cite[Theorem 8]{sw_ants3}}
and serves as the asymptotic analysis of all of these algorithms since producing these inputs is the most expensive step.  
Our survey below focuses on algorithmic advancements. 
Several authors have extended the tabulation bound using known algorithms, including $B = 25 \cdot 10^9$ in \cite{pom_self}, $B = 10^{13}$ in \cite{keller}, $B =10^{14}$ in \cite{guthmann}, and $B = 10^{22}$ in \cite{goutier}.

\subsubsection{Swift's Tabulation} 
The first systemic tabulation of Carmichael numbers was done by J.D. Swift in 1975 on a CDC 1604 and his bound was $10^9$ \cite{swift}.  
Of note, he specifically precludes the approach found in the next section, writing ``The computer programs used depended explicitly on congruence properties of the CN's with respect to their component primes rather than on the pseudoprimality with respect to any particular base. 
A different routine was run for each possible number of primes occurring in the factorization.''   
For this tabulation, the maximum count of prime factors was six.  
This sets the stage for all tabulations that follow.   


\subsubsection{Jaeschke's Tabulation}
In 1990, Gerhard Jaeschke found all Carmichael numbers less than $10^{12}$ \cite{jaeschke_carm}.
Jaeschke had special routines for $d = 3$ and $d=4$ for when $P$ was small that were like the $CD$ method of Subsection \ref{subsec:basic_algs}.  
The set of inputs for his algorithm were preproducts $P$ such that $Pp < 10^{12}$. 

\subsubsection{Pinch's Tabulations}\label{subsec:pinch}

In 1993, Richard Pinch tabulated up to $10^{15}$ \cite{pinch15}.  
Over the next fourteens years he used the same algorithm to extend the bound by orders of magnitude up to $10^{21}$ \cite{pinch16, pinch17, pinch18, pinch21}.

One novel aspect of his tabulation was the ``large prime variation''  which created numbers divisible by a prime greater than $B^{1/3}$.  
This stage provided some inspiration for our own approach.  
He used the Fermat test as a rejection mechanism.  
For prime $P > B^{1/3}$, he created $n = PR$ with $R \in \mathcal{R}$ and for each one he tested if $2^n \equiv 2\pmod{n}$.  
For those that passed, he used trial division to factor $R$ in order to apply Korselt's criterion.  
He argued that using the Fermat test as a filter would cause the cost of factoring to be a lower-order cost in the context of a complete tabulation. 
We repeat his argument and offer a different perspective on his conclusion that is relevant for our present work.

The cost to perform the trial division\footnote{
The factor $R$ ranges in size from $B^{1/3}$ to $B^{2/3}$.} 
is $O(\sqrt{ B/P}) =  O(B^{1/3})$ for each $R$ coming from a given $P$. 
Let $C_B$, be the count that pass to the next stage.  
The count of all $PR$ is
\[ \sum_{B^{1/3} < P < B^{1/2}} \frac{B}{P(P-1)} = O(B^{2/3}).\]
The total cost is $O( B^{2/3} + C_BB^{1/3})$, where the first term is the cost to create and filter all inputs and the second term is the cost of factoring the $R$ for the base-$2$ Fermat pseudoprimes that pass the filter.  
He concludes that as $C_BB^{1/3}$ is ``noticeably smaller than $B^{2/3}$,  
the large-prime variation gives an improvement.''  

What are we counting with $C_B$?   
The base-$2$ Fermat pseudoprimes.  
The count of all base-$2$ Fermat pseudoprimes less than $B$ is conjectured to be of the form $B \exp \left\{  -\log B \log_3  B/ \log_2  B \right\}$.
We are concerned with a subset of size $O(B^{2/3})$ and we extrapolate the exponential term as a probability-like filter.
We conjecture that $ C_B \sim B^{2/3} \exp \left\{  -\log B \log_3 B / \log_2 B \right\}$.
Viewed this way, Pinch's claim that $C_BB^{1/3}$ is noticeably smaller than $B^{2/3}$ is not a product of asymptotic analysis but of the empirical rarity of pseudoprimes in the tabulation range.  
Theoretically,  $B^{2/3}$ is dominated by the nearly linear $C_BB^{1/3}$ term!  

An alternative expression for the cost is $O(B^{2/3} + C_BC_F)$, where $C_B$ is as before and $C_F$ is the cost to factor.
It should be clear than any of the general deterministic factoring algorithms will cause the $C_BC_F$ term to dominate.
However, if we use Algorithm \ref{alg:car_in_p_cond} then this term is, indeed, the lower-order term.

\subsection{Tabulation for \texorpdfstring{$B = 10^{22}$}{}  } \label{subsec:10_22}
We encourage the reader to consult \cite{pinch15, sw_ants2, sw_ants3} for a thorough explanation, 
and we give a concise description suitable for analysis and understanding.   
Our approach was bifurcated first by the size of $P$.
For large preproducts $P$ the tabulation was accomplished by a series of individual tabulations organized by the count of primes dividing $P$.  

Using the algorithms presented in \cite{sw_ants2}, the small preproduct regime considers all $P < X$ and constructs $n = Pqr$.  
The cost of this phase is $O((X \log X)^2)$.
So long as $X$ is chosen so that $(X \log X)^2$ is dominated by $\mathcal{K}(B)$, this regime is a lower-order cost in the asymptotic analysis.  
In practice, we chose $X$ to slightly exceed $B^{1/3}$ and, by doing so, this regime finds all Carmichael numbers less than $B$ with exactly $3$ prime factors.    
The choice of $X$ does not impact the correctness of what follows.  

The large preproduct regime constructs $P$ in a prime-by-prime fashion.  
Here, the tabulation regime is further distinguished by the count of primes in a preproduct.  
We considered $\omega(n) = d$ for $4 \leq d \leq 13$:  ten different tabulations.
We give pseudocode for the $d=5$ algorithm and then give a unifying description of all the tabulation regimes.

\begin{algorithm}[H]  
\caption{Carmichael numbers $n < B$ with $\omega(n) = 5$}
\begin{algorithmic}[1]
\Require The tabulation bound $B$ and the cross over bound $X$
\ForEach{ $q_1$ in $(1, \sqrt[5]{B})$ }
    \State Let $P_1 = q_1$
    \ForEach{ $q_2$ admissible to $P_1$ in $(q_1, \sqrt[4]{B/P_1})$ }
        \State Let $P_2 = P_1q_2$
        \ForEach{ $q_3$ admissible to $P_2$ in $(\max \{q_2, X/P_2\}, \sqrt[3]{B/P_2})$ }
            \State Let $P_3 = P_2q_3$
            \ForEach{ $q_4$ admissible to $P_3$ in $(q_3, \sqrt{B/P_3})$ }
                \State Call \textbf{SPC} on $P_4 = P_3p_4$
            \EndFor
        \EndFor
    \EndFor
\EndFor
\end{algorithmic}
\label{alg:prime_by_prime}
\end{algorithm}
\noindent Note $P_3$, which has enough room for two more primes, exceeds $X$.  
In this way, we do not duplicate work associated to the $P < X$ regime.  

In general, the outer loops correspond to creating preproducts missing more than one prime:  
for $k > 1$ and at the $d-k$ level, a search commences for primes $q$ in $(p, \sqrt[k]{B/P_{d-k}})$ admissible to $P_{d-k}$. 
At the $d-3$ level, we need to ensure that the preproduct constructed exceeds $X$.  
Then every distinct tabulation regime always had three specific levels:
\begin{enumerate}
    \item The $d - 3$ level: $P_{ d - 3 }$ searches for $q$ up to $\sqrt[3]{B/P_{d-3}}$.  
    The starting point for $q$ is the larger of $X/P_{d-3}$ or $p$ to ensure $P_{d-2} = P_{d-3}q > X$.  
    \item The $d - 2$ level: $P_{ d - 2 } > X$ searches for $q$ in $(p, \sqrt{B/P_{d-2}} ).$  
    \item The $d-1$ level is the inner-loop and calls \textbf{SPC}.  
\end{enumerate}
A given preproduct $P$ may appear in multiple tabulation regimes.  
If $P$ has enough room for three more primes, then it also has enough room for two more primes and one more prime.  
However, it is important to know when a given preproduct will have only enough room for one more prime.  
If $P_{d-1} = P_{d-2}q$ where $q \geq \sqrt[3]{B/P_{d-2}}$, then $P_{d-1}$ can only be completed with a single prime. 

\subsection{Observed Problems with Prior Tabulations }\label{subsec:problems}

The asymptotically faster \textbf{SPC} method for the inner loop proved to be ineffective in practice.  
The common case was $P_{d-1}\lambda(P_{d-1}) > B$.  
When this happened, \textbf{SPC} frequently consisted of two steps: 
compute $r^\star$ and discard $Pr^\star$ for being too large.  
There were other downstream effects. 
First,  the cost of getting to the inner loop exceeded the work done in the inner loop.   
This meant that future implementations would be focused on the creation of cyclic products.
This is another hint that our approach was not optimal; the algorithmic realization of the counting argument of these products is not done via a prime-by-prime approach.
Second, this made parallel distribution of the work difficult.  
We chose to distribute at the $\lfloor d/2 \rfloor$ level which was an ad hoc choice made in an attempt to balance two key problems.  
Distributing it higher up was unbalanced and distributing it further down duplicated too much work.  

We summarize the two tabulations regimes in the following way.  
\begin{enumerate}
    \item The small case: ``Tabulate all $n = PR$ with $P < X$, and $\omega(R) = 2$.''
    \item The large case: ``Tabulate all $n = PR$ with $P > X$, and $\omega(R) = 2$.''
\end{enumerate}
The former step had run-time roughly quadratic in $X$ and the latter had run-time matching  Kn\"odel's bound.  
The small case removed an asymptotically small set of inputs from the large case.  
However, completing these inputs as if they were in the large case would have been expensive.
We conclude with a practical example to demonstrate the power of Algorithm \ref{alg:car_in_p_cond}.

\begin{example}\label{mot_example}
Let $P = 101 \cdot 103 \cdot 107 \cdot 109 \cdot 113 \cdot 127$ where $\lambda(P) = 68115600$ and $B = 10^{24}$.   How can we find all $n = PR < B$?  We now have two options:

\begin{enumerate}
    \item Prime-by-prime appending in $7 \leq d \leq 11$ tabulations:  
    \begin{enumerate}
        \item For $d = 7$: $P$ is the only preproduct at the $d-1$ level.
        \item For $d = 8$: $q$ searched for in the interval $(127, 757834)$ and there are $57255$ products $P_{d-1} = Pq$.
        \item For $d = 9$:  $1145658$ products $P_{d-1} = Pqr$.
        \item For $d = 10$: $1227386$ products $P_{d-1} = Pqrs$.
        \item For $d = 11$: $24849$ products $P_{d-1} = Pqrst$.
    \end{enumerate}
    The \textbf{SPC} algorithm is invoked $2455149$ times and there is significant overhead costs in creating those inputs.
    \item There are only $8431$ candidates $n = P(r^\star + k\lambda (P)) < B$.  Simply invoke Algorithm \ref{alg:car_in_p_cond} on each candidate.  
\end{enumerate}
\end{example}

Simply invoking Algorithm \ref{alg:car_in_p_cond} raises new problems.  
Part of the prime-by-prime approach is forced on us for reasons of correctness.  
In order to invoke Algorithm \ref{alg:car_in_p_cond} at scale, we will need a new way to account for the correctness of our algorithm.  
We address these issues in the next section.  

\section{Asymptotically faster algorithms} \label{sec:faster1}

We believe that the $10^{22}$ tabulation did too much work.  
From a pragmatic point of view, we should have had an early abort that would have prevented more primes being appended to $P$ when $P\lambda(P) > B$.  
From a theoretical point of view, the count of cyclic numbers that serve as inputs to the algorithm should be more like $\mathcal{E}(B)$.
We change how we view the two tabulation regimes of Subsection \ref{subsec:problems}.
The new way to consider the two cases is: 
\begin{enumerate}
    \item The small case: ``Tabulate all $n = PR$ with $P < X$ and $\omega(R) = 2$.''
    \item The large case: ``Tabulate all $n = PR$ with $P < X$ and $\omega(R) \geq 3$.''
\end{enumerate}
This conceptual framing of the problem would not have made sense to us prior to discovering Algorithm \ref{alg:car_in_p_cond}; 
the way to achieve $\omega(R)\geq 3$ was with prime-by-prime appending as sketched above.  
While Algorithm \ref{alg:car_in_p_cond} can simply be invoked on every candidate, we state an improved version based on sieving to reduce the amount of work required.

\subsection{ \texorpdfstring{$\lambda(P)$}{}-sieving } \label{subsec:l_sieve}

Given a cyclic number $P$, how do we find $R$ so that $n= PR$ is a Carmichael number?
Previously, tabulation algorithms constructed $P$ in factored form and the search was for an $R$ that was prime.  
The naive thing to do would be to check $\mathcal{R}$ for primes. 
Now, it is almost as efficient (the $\log n$ factors may differ) to search for composite $R$ by using Algorithm \ref{alg:car_in_p_cond}.
Having the prime factorization of $P$ just means that there is less work for Algorithm \ref{alg:car_in_p_cond} to do.
Further, we typically require that the primes dividing $R$ all exceed $p$.
Since these are in an arithmetic progression, sieving removes candidates faster than modular exponentiation.  

\subsubsection{ Controlling the prime divisors of  \texorpdfstring{$R$}{} }

    We may sieve by
\begin{enumerate}
    \item Primes:
    \begin{enumerate}
        \item The typical scenario is that we are appending to $P$ primes exceeding $p$, which means that we can sieve by any prime $q \leq p$ (including the prime divisors of $P$).
        \item We can sieve by any prime $q$ that is inadmissible to $P$.
    \end{enumerate}
    \item Composites:
    \begin{enumerate}
        \item We can sieve by the square of any prime.
        \item More generally, we can sieve by any inadmissible product.  For example, if $q$ and $2q + 1$ are both primes ($2q+1$ is not admissible to $q$), then we may sieve by $q(2q + 1)$. E.g., even if $3$ and $7$ are both admissible to $P$, $21$ cannot divide any Carmichael multiple of $P$.
    \end{enumerate}
\end{enumerate}
We refer to finding Carmichael numbers in this fashion as a \textbf{$\lambda(P)$-sieve}.  
\vspace{1em}

\noindent \textbf{Example \ref{mot_example} (cont'd).}  
\textit{Let $P = 101 \cdot 103 \cdot 107 \cdot 109 \cdot 113 \cdot 127$ where $\lambda(P) = 68115600$ and $B = 10^{24}$.   
There are only $8431$ candidates $n = P(r^\star + k\lambda (P)) < B$.  
There are $3886$ $k$ values that have $r^\star + k\lambda (P)$ divisible by a prime less than $129$ which are found with $\lambda(P)$-sieving. 
So, Algorithm \ref{alg:car_in_p_cond} is only invoked on $4545$ candidates.}

\subsection{Partitioning Carmichael numbers by preproducts }\label{subsec:partition}

Let $(P, \lambda(P), b)$ be the $3$-tuple representing the set of Carmichael numbers of the form $n = PR$ where the prime divisors of $R$ exceed $b$ or $R = 1$.  Observe a partition of Carmichael numbers:  

\begin{enumerate}

    \item $( 3, 2, 5)$: $n$ is a multiple of $3$ and the other primes exceed $5$.
    \item $( 5, 4, 5)$: $n$ is a multiple of $5$ and the other primes exceed $5$.
    \item $( 15, 4, 5)$: $n$ is a multiple of $15$ and the other primes exceed $5$.
    \item $( 1, 1, 5)$: $n$ has least prime factor exceeding $5$. 
\end{enumerate}

Our set of jobs is of the form $(P, \lambda(P), \max \{ X/P, p \})$ where $P < X$ and $Pp^3 < B$.  
These are the preproducts that can allow three or more primes to be appended.  
This is a subset of the preproducts that would have been found at the $P_{d-3}$ level.  
The key idea is that once we append a prime to a given $P$ we do not want to create a preproduct already in the initializing set.  
Either $p$ is large enough so that $Pp > X$ or we set the append bound to $X/P$ so that the first prime appended to $P$ causes the preproduct to exceed $X$.    

\subsection{An improved tabulation algorithm}

With the set of initializing preproducts, we call the following algorithm on each one.  

\begin{algorithm}[H]  
\caption{Carmichael numbers $PR$ with $\omega(R) \geq 3$}
\begin{algorithmic}[1]
\Require The tabulation bound $B$, crossover bound $X$, $P$, and $\lambda(P)$
\If{  $P \lambda(P)^2 > B$ } 
        \State \textbf{$\lambda(P)$-sieve}   
\Else
    \ForEach{ prime $q$ admissible to $P$ in $(\max \{p, X/P\}, \sqrt[3]{B/P})$ }
        \State Recursively call this algorithm with preproduct $Pq$
    \EndFor
    \If{ $P > X $}
        \ForEach{ prime $q$ admissible to $P$ in $ (\sqrt[3]{B/P},  \sqrt{B/P})$ }
            \State  Call \textbf{SPC} on  $Pq$ 
        \EndFor
        \If{ $P > pX$ }
            \State Call \textbf{SPC} on  $P$      
        \EndIf
    \EndIf
\EndIf
\end{algorithmic}
\label{alg:tabulation}
\end{algorithm}

\begin{theorem}
Algorithm \ref{alg:tabulation} is correct.
\end{theorem}

\begin{proof}
The branching rule found in line $1$ is irrelevant for the correctness of this algorithm but is important for the run-time.  Each branch is capable of independently producing a correct tabulation.

The first branch (line $2$) will produce a correct tabulation because it is an exhaustive search.

The prime-by-prime branch (lines $4$-$14$) treats a given preproduct having more than two primes to append (lines $4$-$6$), having exactly two primes to append (lines $8$-$10$), and as having exactly one prime to append (line $12$).  So this finds all possible completions in a prime-by-prime manner.  
\end{proof}

\textbf{Remark:}
The $\lambda(P)$-sieving branch can find $R$ with $\omega(R) < 3$. 
If it is necessary to avoid this we could 
discard duplicates in the post-processing sorting of the entire tabulation or
we could track the recursion depth.  

Since line $1$ of Algorithm \ref{alg:tabulation} played no role in the correctness of the algorithm, its utility is as a heuristic to complete a given preproduct as efficiently as possible.  
The heuristic comes from balancing $B/(P\lambda(P))$ (a crude linear estimate of $\lambda(P)$-sieving) against $\sqrt{B/P}$ (a crude estimate of the cost of the prime-by-prime branch).  

We forgo a traditional analysis of this algorithm in order to state a more optimal algorithm.  
However, we make one comparison.  
For our initializing set of preproducts, it is impossible for the first branch to be taken.  
Then every initializing $P$ gets expanded into $Pq$ and recursively called (lines 4-6). 
This set of preproducts satisfies  $B^{1/3} < Pq < B^{2/3}.$  
The lower bound comes from the fact that the expansion set creates products greater than $X$ which we have chosen to be $B^{1/3}$.
The upper bound comes from the fact if $P$ is a prime just less than $B^{1/3}$, then the expansion is to a product nearly $B^{2/3}$.
The products that land close to $B^{1/3}$ in the initial expansion step are likely expanded again because they will likely not achieve the $P\lambda(P)^2 > B$ requirement.  
Our lower bound is likely weaker than it could be and the lower bound is what is important.
By assuming that all Carmichael numbers are found by $\lambda(P)$-sieving, we could bound the cost with the sum
\[ \sum_{B^{1/3} < P < B^{2/3} }  \frac{B}{P\lambda(P) }.  \]
Contrast this with 
\[ \sum_{B^{1/2} < P < B^{2/3} }  \frac{B}{P\lambda(P) }  \]
which is the key sum that Erd\H{o}s considered in \cite{erdos_CN} to bound the count of Carmichael numbers by
\[ B \exp \left\{ \frac{ -k \log B \log_3 B}{\log_2 B} \right\} \]
for some positive constant $k$. 
We suspect that by using the lower bound of $B^{1/3}$ that the same argument will go work but with a worse implied $k$ value.  
We forgo trying to prove this because we state a better algorithm that has an explicitly provided $k$ value.

\subsection{A Heuristically Optimal Algorithm }\label{sec:faster2}

Counting Carmichael numbers in the arithmetic progression $P(r^\star + k \lambda(P))$ is central to the argument in the proof of Theorem \ref{thm:CN_count}.  
Since $\lambda(P)$-sieving (in conjunction with Algorithm \ref{alg:car_in_p_cond}) allows efficient identification of Carmichael numbers in that arithmetic progression, the counting argument has an algorithmic realization.


\begin{repeatthm}{thm:optimal}
For each $\epsilon > 0$, there is an $B_0(\epsilon)$ such that for all $B > B_0(\epsilon)$, there is an algorithm that can find all Carmichael numbers less than $B$ in at most $\mathcal{E}(B)$ 
queries of ``Is $n$ a Carmichael number?''.  
\end{repeatthm}

\begin{proof}
There are three sets of Carmichael numbers $n \leq B$ that are counted.  We show that there exists an algorithm that tabulates each of these sets in time proportional to the size of the set times the cost of answering the query, ``Is $n$ a Carmichael number?''
Since this cost is polynomial time, it remains to show that the inputs can be generated as claimed.  The three sets are
\begin{enumerate}
    \item $n \leq B^{1 - \delta}$,
    \item $B^{1 - \delta} < n \leq B $ and $n$ has a prime factor $p \geq B^{\delta}$, and
    \item $B^{1 - \delta} < n \leq B $ and all the prime factors of $n$ are below $B^{\delta}$.
\end{enumerate}
In the proof of Theorem \ref{thm:CN_count}, $\delta$ is eventually set to be $\epsilon/5$ to complete the proof but we describe the sets in terms of the parameter $\delta$.    
\begin{enumerate}
    \item  The first set represents the small Carmichael numbers.  
    This is a tabulation problem with a smaller upper bound.  
    We view this algorithm as recursive and apply the method of the other two sets to find these numbers if necessary\footnote{
    In reality, this set may be provided by some prior tabulation.}.  
    
    \item The count of these Carmichael numbers is bounded by considering sets $P( r^\star + k \lambda(P)) < B$ indexed by $k$, where $P = p$ is a prime exceeding $B^{\delta}$.  
    
    Algorithmically, use a sieve to find primes in $(B^{\delta}, \sqrt{B}]$.  For each prime $P > B^{\delta}$, use $\lambda(P)$-sieving.  The total cost is bounded by  
\[ \sum_{P > B^{\delta}} \frac{B}{P(P-1)}  < 2B^{1-\delta}\]
invocations of Algorithm \ref{alg:car_in_p_cond} 
    which is a lower order cost compared to set 3.
    
    \item  The count of these Carmichael numbers is bounded  by considering $P( r^\star + k \lambda(P))< B $, where $B^{1 - 2\delta} < P \leq B^{1 - \delta} $ are cyclic numbers that are $B^{\delta}$-smooth.  
    
    Algorithmically, we could use a sieve on this interval to find the $B^\delta$-smooth numbers and this would suffice for the purposes of our proof.  
    However, we recommend constructing the set explicitly. 
    This is the prime-by-prime construction technique but restricted to a table of primes less than $B^\delta$.
    In this way, we could have early-abort by not proceeding whenever $P\lambda(P) > B$.  
    
    Similar to the previous part, for each $P$ we use $\lambda(P)$-sieving.   The number of candidates created here is $B\exp(-(1-\epsilon)\log x \cdot \log_3 x / \log_2 x )$.  And this is what contributes the asymptotic cost given by the work in \cite{pom_self}.
\end{enumerate}

This demonstrates that the upper bound can be algorithmically realized. 
\end{proof}

If the bound in Theorem \ref{thm:CN_count} is also a lower bound, as a heuristic argument shows it is, then the algorithm in the proof of Theorem \ref{thm:optimal} is optimal up to the cost of answering the query ``is $n$ a Carmichael number?''

We note two interesting features about this theoretical algorithm.  
First, only $\lambda(P)$-sieving is used. 
Second, since only $\lambda(P)$-sieving is used, it is easily adapted to tabulating Carmichael numbers in an interval: choose the $k$ so that $P(r^\star + k\lambda(P))$ only lands in the given interval.  
It has been traditional to re-do the entire tabulation as an independent check of prior work.  
Suppose we want to forgo this tradition.  
If we trust the correctness of the current tabulation, then we may provide a tabulation of all Carmichael numbers up to $10^{25}$ by tabulating Carmichael numbers in the interval $[10^{24}, 10^{25}]$.

\section{Carmichael numbers up to \texorpdfstring{$10^{24}$}{} }\label{sec:the_tab}

We estimate the large case for the $10^{22}$ computation from \cite{sw_ants3} took about $104$ days on $32$ Xeon E5-2630 processors ($192$ total cores).  
This was the prime-by-prime appending routine that had different tabulation regimes for each distinct prime count.  
In contrast, we estimate that Algorithm \ref{alg:tabulation} completed the $10^{24}$ case in about $97$ days on $5$ AMD EPYC 9734 processors ($560$ total cores). 

In terms of core hours, we achieved a bound that was $100$ times greater with about $2.72$ times the number of core hours.  
This is not a fair comparison since the two processors are several generations apart\footnote{
PassMark (\url{https://tinyurl.com/m3nm8vdy}) shows each EPYC 9734 thread performs about $1.8$ times the work of one E5-2630 thread.}.  
We suspect the recent computation did about $5$ times more work to achieve a bound $100$ times greater.
The $5$ could possibly be even lower because the parameter $X$ was more favorable to the $10^{22}$ tabulation. 
That is,  for $B = 10^{22}$ we had $X = 7 \cdot 10^7 \approx B^{0.3566}$, and for $B = 10^{24}$ we had  $X = 1.25\cdot 10^8  \approx B^{0.3373}$.  

Serial tabulations were performed by the codebase from \cite{sw_old_code} and the new codebase \cite{sw_new_code} on a single core of an AMD Ryzen 9 3900X processor.  
It is encouraging to see that the asymptotic crossover is realized already by $B = 10^{18}$.
Even more encouraging to note is that Algorithm \ref{alg:tabulation} has less duplicated work in its parallelization scheme than the ad hoc parallel distribution for the prime-by-prime approach.

\begin{table}
\centering
\begin{tabular}{|c|c|c|}
  \hline
$B$ &  Prime by prime & Algorithm \ref{alg:tabulation} \\ \hline \hline
$10^{14}$ & 64 & 140 \\ \hline
$10^{15}$ & 397 & 711 \\ \hline
$10^{16}$ & 2456 & 3631 \\ \hline
$10^{17}$ & 15361 & 18352 \\ \hline
$10^{18}$ & 95476 & 92891 \\ \hline
\end{tabular}
\caption{Timing comparison (in seconds)}
\label{table:tab_timings}
\end{table}

\subsection{Statistics on the tabulation }\label{subsec:stats}

In the tables, we provide various statistics and counts on the tabulation.
Our tables omit the lower order of magnitudes, which may be found in the previous literature or in {\cite[Section 7.2]{webster_CN}}.

We start with the counts of Carmichael numbers by order of magnitude.  
We include four supplementary functions related to the lower-bound heuristic counting function.  
In \cite{dist_psp}, a more precise version of $\mathcal{E}(B)$ was stated:  
\[  B \exp{\left\{ -\frac{\log{B}}{\log_2{B}}\left( \log_3B + \log_4B + \frac{\log_4B - 1}{\log_3 B} + O\left( \left( \frac{ \log_4B}{\log_3{B}}\right)^2\right) \right) \right\} }\enspace,\]
which served as an upper bound and a heuristic lower bound.  
We provide four numeric quantities associated with the above heuristic.

The first function is the approximation $C(B)  = B^{j_1(B)}$.  
The expectation is that $j_1(B) \rightarrow 1$ as $B \rightarrow \infty$.  
The second function is $j_2(B)$, where
\[ C(B) = B^{1 - j_2(B)(\log_3B + \log_4B)/\log_2B} \enspace.\]
The expectation is that $j_2(B) \rightarrow 1$ as $B \rightarrow \infty$.  
The third is $j_3(B)$, where 
\[ C(B) = B \exp{\left\{ -\frac{j_3(B)\log{B} \log_3 B}{\log_2{B}} \right\} }\enspace.\]
The expectation is that $j_3(B) \rightarrow 1$ as $B \rightarrow \infty$.  
Finally, consider the function $j_4(B)$ where, 
\[ C(B) = B \exp{\left\{ -\frac{\log{B}}{\log_2{B}}\left( \log_3B + \log_4B + j_4(B)\right)\right\} }\enspace.\]
The expectation is that $j_4(B) \rightarrow 0$ as $B \rightarrow \infty$.

\begin{table}
\centering
\begin{tabular}{|c|r|c|c|c|c|c|}
  \hline
  $B$ & $C(B) $ & $j_1(B)$ & $j_2(B)$ & $j_3(B)$ & $j_4(B) $   \\ \hline \hline 
  $10^{16}$ &    $246683$ & 0.337008 & 1.561018 & 1.864056 & 0.859364  \\ \hline
  $10^{17}$ &    $585355$ & 0.339259 & 1.551898 & 1.864722 & 0.861721  \\ \hline
  $10^{18}$ &   $1401644$ & 0.341479 & 1.543801 & 1.865223 & 0.863922   \\ \hline
  $10^{19}$ &   $3381806$ & 0.343639 & 1.536592 & 1.865646 & 0.866053  \\ \hline
  $10^{20}$ &   $8220777$ & 0.345745 & 1.530103 & 1.865977 & 0.868077  \\ \hline
  $10^{21}$ &  $20138200$ & 0.347810 & 1.524189 & 1.866191 & 0.869947  \\ \hline
  $10^{22}$ &  $49679870$ & 0.349826 & 1.518777 & 1.866320 & 0.871694  \\ \hline
  $10^{23}$ & $123381982$ & 0.351793 & 1.513795 & 1.866375 & 0.873323  \\ \hline
  $10^{24}$ & $308279939$ & 0.353706 & 1.509200 & 1.866382 & 0.874867  \\ \hline
\end{tabular}
\caption{Count of Carmichael numbers and numerical estimates}
\label{table:numerical_estimates}
\end{table}

The functions $j_3$ and $j_4$ appear as $j$ and $k$ in the table in \cite{dist_psp}.  
That table went up to $25 \cdot 10^9$.  
In that table and for the next few orders of magnitude, their values are trending in the expected direction. 
Thereafter, these two quantities trend away from their expected values.  
We are unsure how we could use $j_3$ or $j_4$ in a predictive way if they are trending in the wrong direction.
If anything might be conjectured from $j_3$ and $j_4$, it would be that the lower order terms in the exponential expression are incorrect as lower bounds; 
we believe the table is too small for them to behave consistently with the asymptotic expectations.
To make predictions we will have to use $j_1$ or $j_2$. 

In \cite{two_contra}, the authors speculate that $C(B)$ will not exceed $B^{1/2}$ for any $B < 10^{100}$. 
Based on the data above, we conjecture that the threshold will not be crossed while $B < 10^{150}$.  
Justification for this conjecture will depend specifically on $j_2$.  
First note that the rate of decrease for $j_2$ is itself decreasing; 
$j_2$ decreased by $0.00591, 0.00541, 0.00498$, and $0.00459$ across the last $5$ orders of magnitude.  
If $C(10^{100})= 10^{50}$ then $j_2(10^{100}) \approx 1.2248$ and if $C(10^{150}) = 10^{75}$ then $j_2(10^{150}) \approx 1.2521$.
Meeting $1.2248$ across $76$ orders of magnitude yields an average decrease of $0.0037$, while meeting $1.2521$ across $126$ orders of magnitude yields an average decrease of $0.002$.  We believe that the latter is more likely, as it is further away from the smallest observed difference of $0.00459$.

Tables \ref{table:count_small} and  \ref{table:count_large} contain the distribution of Carmichael numbers by the count of prime factors. 
The columns represent the prime factor counts and the rows are by orders of magnitude.

\begin{table}
\centering
\begin{tabular}{|c|c|c|c|c|c|c|}
  \hline
 $B$ &   $d = 3$ &  $4$ &  $5$ &  $6$ &  $7$ &  $8$  \\  \hline \hline
$10^{16}$ &  $10816$ &  $16202$ &  $55012$ &  $86696$ &  $60150$ &  $16348$  \\  \hline 
$10^{17}$ &  $19539$ &  $25758$ &  $100707$ &  $194306$ &  $172234$ &  $63635$  \\  \hline 
$10^{18}$ &  $35586$ &  $40685$ &  $178063$ &  $414660$ &  $460553$ &  $223997$  \\  \hline 
$10^{19}$ &  $65309$ &  $63343$ &  $306310$ &  $849564$ &  $1159167$ &  $720406$  \\  \hline 
$10^{20}$ &  $120625$ &  $98253$ &  $514381$ &  $1681744$ &  $2774702$ &  $2148017$ \\  \hline 
$10^{21}$ &  $224763$ &  $151566$ &  $846627$ &  $3230120$ &  $6363475$ &  $6015901$  \\  \hline 
$10^{22}$ &  $420658$ &  $232742$ &  $1370257$ &  $6034046$ &  $14056367$ &  $16005646$  \\  \hline 
$10^{23}$ &  $790885$ &  $357078$ &  $2181324$ &  $11007792$ &  $30038569$ &  $40696220$ \\  \hline 
$10^{24}$ &  $1494738$ &  $548265$ &  $3429390$ &  $19665194$ &  $62338997$ &  $99469866$  \\  \hline 
\end{tabular}
\caption{Counts by prime factors $d < 9$}
\label{table:count_small}
\end{table}

In \cite{two_contra}, it is conjectured that $C_k(B) > C_{k+1}(B)$ should hold once $B$ is large enough. 
At this range, the inequality only holds for $k = 3$ and $ k > 8$.  
We conjecture that we are in the asymptotic domain for $k=3$;
that is, we think this inequality will not reverse again.
We also think that we are close to $C_3(B) > C_5(B)$ and that this inequality will be obtained before $C_4(B) > C_5(B)$ is obtained.  
We do not think we are in the asymptotic range for the $k > 8$ inequalities and that these inequalities exist only because $B$ is so small that we have not found enough Carmichael numbers with $k > 8$ prime factors.  
We conjecture that each of these inequalities (for $k > 8$) will reverse (before reversing again).  
There is not enough data in the table for us to feel comfortable conjecturing when $C_4(B) > C_5(B)$.

\begin{table}
\centering
\begin{tabular}{|c|c|c|c|c|c|c|}
  \hline
$B$ & $d = 9$ &  $10$ &  $11$ &  $12$ &  $13$ &  $14$ \\  \hline \hline
$10^{16}$ &$1436$ &  $23$ &  $0$ &  $0$ &  $0$ &  $0$ \\  \hline 
$10^{17}$ &$8835$ &  $340$ &  $1$ &  $0$ &  $0$ &  $0$ \\  \hline 
$10^{18}$ &$44993$ &  $3058$ &  $49$ &  $0$ &  $0$ &  $0$ \\  \hline 
$10^{19}$ &$196391$ &  $20738$ &  $576$ &  $2$ &  $0$ &  $0$ \\  \hline 
$10^{20}$ &$762963$ &  $114232$ &  $5804$ &  $56$ &  $0$ &  $0$ \\  \hline 
$10^{21}$ &$2714473$ &  $547528$ &  $42764$ &  $983$ &  $0$ &  $0$ \\  \hline 
$10^{22}$ &$8939435$ &  $2347828$ &  $262818$ &  $10018$ &  $55$ &  $0$ \\  \hline 
$10^{23}$ &$27660029$ &  $9178659$ &  $1388714$ &  $81434$ &  $1277$ &  $1$ \\  \hline 
$10^{24}$ &$81041495$ &  $33199094$ &  $6529921$ &  $547590$ &  $15328$ &  $61$ \\  \hline 
\end{tabular}
\caption{Counts by prime factors $d \geq 9$}
\label{table:count_large}
\end{table}

One question explored in \cite{two_contra} regards imprimitive Carmichael numbers.  
The motivation was Chernick's family where if $6m + 1, 12m + 1,$ and $18m + 1$ are all prime then their product is a Carmichael number.  
This example may be generalized. 
Every Carmichael number may be used to generate an infinite family of Carmichael numbers by considering the successive ratios of $p_i -1$ for all primes dividing $n$ under the assumption that these ratios may be achieved by prime-tuples infinitely often.
The least Carmichael number in a given family is primitive. 
For example, $7 \cdot 13 \cdot 19$ and $37 \cdot 73 \cdot 109$ both have the same set of successive ratios (arising from Chernick's family) but only the first is primitive.  

In {\cite[Corollary 4]{two_contra}}, the authors prove that the set of imprimitive Carmichael numbers is $o(B^{1/3})$. 
In both \cite{harman2, lichtman}, the authors establish that the count of all Carmichael numbers exceeds $B^{1/3}$.  
Combining these two results shows that the set of imprimitive Carmichael numbers forms a relative $0$-density subset of all Carmichael numbers\footnote{
We thank Carl Pomerance for pointing this out.}. 
We extend the table of imprimitive Carmichael numbers found in \cite{two_contra}. 
The column labeled ``\%'' contains the percentage of Carmichael numbers that are imprimitive. 
These percentages are decreasing and these numbers make up $\approx 0.43 \%$ our total tabulation, which is concordant with a $0$-density subset.  
There are a total of $1312179$ imprimitve Carmichael numbers less than $10^{24}$.  

\begin{table}
\centering
\begin{tabular}{|c|c|c|c|c|c|c|}
\hline
$B$ &  \% & $d = 3$ &  $4$ &  $5$ &  $6$ &  $7$  \\  \hline \hline
 $10^{16}$ & $ 2.921$ & $6615$ &  $563$ &  $30$ &  $0$ &  $0$ \\  \hline 
 $10^{17}$ & $ 2.363$ & $12725$ &  $1036$ &  $72$ &  $1$ &  $0$ \\  \hline 
 $10^{18}$ & $ 1.880$ & $24396$ &  $1797$ &  $156$ &  $4$ &  $0$  \\  \hline 
 $10^{19}$ & $ 1.489$ & $46877$ &  $3193$ &  $264$ &  $8$ &  $1$  \\  \hline 
 $10^{20}$ & $ 1.169$ & $89854$ &  $5745$ &  $501$ &  $21$ &  $1$  \\ \hline 
 $10^{21}$ & $ 0.915$ & $173331$ &  $10070$ &  $903$ &  $48$ &  $2$  \\  \hline 
 $10^{22}$ & $ 0.713$ & $334737$ &  $17770$ &  $1617$ &  $91$ &  $3$  \\  \hline 
 $10^{23}$ & $ 0.552$ & $647265$ &  $30963$ &  $2930$ &  $174$ &  $5$  \\  \hline 
 $10^{24}$ & $ 0.426$ & $1253176$ &  $53564$ &  $5097$ &  $331$ &  $11$ \\  \hline 
\end{tabular}
\caption{Counts of imprimitive Carmichael numbers}
\label{table:imprimitive_CN}
\end{table}

\section{Related Problems}

\subsection{Future implementation}
There are two obvious implementation improvements if someone were to continue tabulating Carmichael numbers. 
First, the heuristically optimal algorithm could be tried and compared to our Algorithm \ref{alg:tabulation}.
Second, a switch away from GMP's arbitrary precision arithmetic to $128$-bit arithmetic could find significant performance increases.  
At the time of our implementation, we did not know of any library that could perform modular exponentiations faster than GMP in our range.  
However, we discovered Jeffrey Hurchalla's library \cite{128bit}; it seems to run about $40$\% faster for inputs in our range. 

\subsection{Absolute Lucas pseudoprimes}
A generalization of Carmichael numbers is to change the underlying divisibility sequence to Lucas sequences \cite{analog_carm}.  
In \cite{helm_webster}, it was shown that several of the themes of \cite{pinch15, sw_ants2} can be applied to tabulating these numbers and it seems natural that the results in this paper could be extended as well.  
In particular, we believe that these numbers can also be factored efficiently akin to Algorithm \ref{alg:car_in_p_cond} due to an identity comparable to the difference of squares factorization:
\[ U_{2^sn'}(A,B) = V_{n'}(A,B)\prod_{i = 0}^{s-1}U_{2^in'}(A,B),\]
where $U_k(A,B)$ and $V_k(A,B)$ take their usual meanings for Lucas sequences and $n'$ is the odd part of $n - \delta_n$ with $\delta_n$ being the Jacobi symbol of $n$ with respect to the discriminant of the sequence.

\subsection{Lehmer's totient problem}
Lehmer asked if there is a number $n$ such that $\phi(n) | (n-1)$.  
We believe that it should be possible to adopt the themes in this paper to design an algorithm to tabulate all examples less than $B$ in time $\widetilde{O}(B^{1/2})$ \cite{lehmer_totient_1, lehmer_totient_2}.

\subsection{Smallest Carmichael numbers with a fixed count of prime factors}
While we claim that our current work supersedes the prior work, that does not mean the prior work is irrelevant.  
Using the prime-by-prime approach in conjunction with composite completions (stopping when $P \lambda(P) > B$), undergraduates at Butler University, Aidan Johnson and Sylvia Webster, were able to find the smallest Carmichael numbers with exactly $36 \leq k \leq 39$ prime factors.  
The implementation used a heuristic guess for $B$ and updated to a smaller upper bound if a new candidate Carmichael number was found.
In this way we found the following numbers, extending Pinch's list by four entries:

\vspace{1em}
{\scriptsize
\begin{enumerate}[leftmargin=25pt]
\setcounter{enumi}{35}
\item  $8807647960173239406549096523101583812469752944862398843391133069112414401$
\item  $6434244746680627456690259889009091199393562391278723488104969729523529672001$
\item  $7058745493820052204846593069692928313507471037675454287017121977299708119200001$ 
\item  $2475018597201553460049849726869636604687455728815898384023241864617315492194337601$ 
\end{enumerate}
}

\section*{Acknowledgements}  We thank Richard Pinch, Carl Pomerance, Andrew Fiori, Ha Tran, and Jon Sorenson for helpful discussions that have made this paper better.  
Paul Pollack was especially helpful in clarifying some key points in Section \ref{sec_carinp}.  
Thank you to Drew Sutherland and the Simons Collaboration in Arithmetic Geometry, Number Theory, and Computation for providing computational support.
Thanks also to Frank Levinson for generous donations to Butler University that helped make the tabulation possible.

\bibliographystyle{amsplain}
\bibliography{manybases}

\providecommand{\bysame}{\leavevmode\hbox to3em{\hrulefill}\thinspace}
\providecommand{\MR}{\relax\ifhmode\unskip\space\fi MR }
\providecommand{\MRhref}[2]{%
  \href{http://www.ams.org/mathscinet-getitem?mr=#1}{#2}
}
\providecommand{\href}[2]{#2}
\begin{thebibliography}{10}

\bibitem{AKS}
M.~Agrawal, N.~Kayal, and N.~Saxena, \emph{P{RIMES} is in {P}}, Ann. of Math.
  (2) \textbf{160} (2004), no.~2, 781--793. \MR{2123939}

\bibitem{inf_carm}
W.~R. Alford, Andrew Granville, and Carl Pomerance, \emph{There are infinitely
  many {C}armichael numbers}, Ann. of Math. (2) \textbf{139} (1994), no.~3,
  703--722. \MR{1283874 (95k:11114)}

\bibitem{pom_goutier}
Pierre Beauchemin, Gilles Brassard, Claude Cr\'epeau, Claude Goutier, and Carl
  Pomerance, \emph{The generation of random numbers that are probably prime},
  J. Cryptology \textbf{1} (1988), no.~1, 53--64. \MR{935901}

\bibitem{beeger}
N.~G. W.~H. Beeger, \emph{On composite numbers {$n$} for which {$a^{n-1}\equiv
  ({\rm mod}\,n)$} for every {$a$} prime to {$n$}}, Scripta Math. \textbf{16}
  (1950), 133--135. \MR{36773}

\bibitem{bls}
John Brillhart, D.~H. Lehmer, and J.~L. Selfridge, \emph{New primality criteria
  and factorizations of {$2\sp{m}\pm 1$}}, Math. Comp. \textbf{29} (1975),
  620--647. \MR{384673}

\bibitem{RDcarm}
R.~D. Carmichael, \emph{Note on a new number theory function}, Bull. Amer.
  Math. Soc. \textbf{16} (1910), no.~5, 232--238. \MR{1558896}

\bibitem{div_in_class_con}
Don Coppersmith, Nick Howgrave-Graham, and S.~V. Nagaraj, \emph{Divisors in
  residue classes, constructively}, Math. Comp. \textbf{77} (2008), no.~261,
  531--545. \MR{2353965}

\bibitem{cranpom}
Richard Crandall and Carl Pomerance, \emph{Prime numbers}, second ed.,
  Springer, New York, 2005, A computational perspective. \MR{2156291}

\bibitem{duparc}
H.~J.~A. Duparc, \emph{On {C}armichael numbers}, Simon Stevin \textbf{29}
  (1952), 21--24. \MR{48492}

\bibitem{erdos_CN}
P.~Erd\"os, \emph{On pseudoprimes and {C}armichael numbers}, Publ. Math.
  Debrecen \textbf{4} (1956), 201--206. \MR{79031}

\bibitem{goutier}
C.~Goutier, \emph{Readme for $10^{22}$ tabulation}, 2022,
  \url{https://www-labs.iro.umontreal.ca/~goutier/OEIS/A055553/readme.text}.

\bibitem{two_contra}
Andrew Granville and Carl Pomerance, \emph{Two contradictory conjectures
  concerning {C}armichael numbers}, Math. Comp. \textbf{71} (2002), no.~238,
  883--908. \MR{1885636}

\bibitem{guthmann}
A.~Guthmann, \emph{On the computation of carmichael numbers}, Tech. Report 218,
  Universität Kaiserslautern, 1992,
  \url{https://kluedo.ub.rptu.de/frontdoor/deliver/index/docId/5039/file/Guthmann_On+the+computation+of+Carmichael+numbers.pdf}.

\bibitem{harman1}
G.~Harman, \emph{On the number of {C}armichael numbers up to {$x$}}, Bull.
  LonD. Math. Soc. \textbf{37} (2005), no.~5, 641--650. \MR{2164825}

\bibitem{harman2}
\bysame, \emph{Watt's mean value theorem and {C}armichael numbers}, Int. J.
  Number Theory \textbf{4} (2008), no.~2, 241--248. \MR{2404800}

\bibitem{helm_webster}
C.~Helmreich and J.~Webster, \emph{Tabulating absolute {L}ucas pseudoprimes},
  Integers \textbf{24A} (2024), Paper No. A8, 18. \MR{4750800}

\bibitem{128bit}
Jeffrey Hurchalla, \emph{Clockwork modular arithmetic library},
  \url{https://github.com/hurchalla/modular_arithmetic}, 2025.

\bibitem{jaeschke_carm}
Gerhard Jaeschke, \emph{The {C}armichael numbers to {$10^{12}$}}, Math. Comp.
  \textbf{55} (1990), no.~191, 383--389. \MR{1023763}

\bibitem{keller}
W.~Keller, \emph{The {C}armichael numbers to {$10^{13}$}}, Abstracts Amer.
  Math. Soc. \textbf{9} (1988), 328--329.

\bibitem{knodel}
Walter Kn\"odel, \emph{Eine obere {S}chranke f\"ur die {A}nzahl der
  {C}armichaelschen {Z}ahlen kleiner als {$x$}}, Arch. Math. \textbf{4} (1953),
  282--284. \MR{57903}

\bibitem{div_in_class}
H.~W. Lenstra, Jr., \emph{Divisors in residue classes}, Math. Comp. \textbf{42}
  (1984), no.~165, 331--340. \MR{726007}

\bibitem{LP}
H.~W. Lenstra, Jr. and C.~Pomerance, \emph{Primality testing with {G}aussian
  periods}, J. Eur. Math. Soc. (JEMS) \textbf{21} (2019), no.~4, 1229--1269.
  \MR{3941463}

\bibitem{lichtman}
J.~D. Lichtman, \emph{Primes in arithmetic progressions to large moduli, and
  shifted primes without large prime factors}, 2022,
  \url{https://arxiv.org/abs/2211.09641}.

\bibitem{lehmer_totient_2}
Florian Luca and Carl Pomerance, \emph{On composite integers {$n$} for which
  {$\phi(n)|n-1$}}, Bol. Soc. Mat. Mexicana (3) \textbf{17} (2011), no.~1,
  13--21. \MR{2978700}

\bibitem{miller}
G.~L. Miller, \emph{Riemann's hypothesis and tests for primality}, J. Comput.
  System Sci. \textbf{13} (1976), no.~3, 300--317. \MR{480295}

\bibitem{monier}
Louis Monier, \emph{Evaluation and comparison of two efficient probabilistic
  primality testing algorithms}, Theoret. Comput. Sci. \textbf{12} (1980),
  no.~1, 97--108. \MR{582244}

\bibitem{pinch15}
R.~G.~E. Pinch, \emph{The {C}armichael numbers up to {$10^{15}$}}, Math. Comp.
  \textbf{61} (1993), no.~203, 381--391. \MR{1202611}

\bibitem{pinch16}
\bysame, \emph{The {C}armichael numbers up to {$10^{16}$}}, March, 1998,
  \texttt{arXiv:math.NT/9803082}.

\bibitem{pinch17}
\bysame, \emph{The {C}armichael numbers up to {$10^{17}$}}, April, 2005,
  \texttt{arXiv:math.NT/0504119}.

\bibitem{pinch18}
\bysame, \emph{The {C}armichael numbers up to {$10^{18}$}}, April, 2006,
  \texttt{arXiv:math.NT/0604376}.

\bibitem{pinch21}
\bysame, \emph{The {C}armichael numbers up to {$10^{21}$}}, May, 2007,
  \texttt{s369624816.websitehome.co.uk/rgep/p82.pdf}.

\bibitem{lehmer_totient_1}
Carl Pomerance, \emph{On composite {$n$} for which {$\varphi (n)\mid n-1$}.
  {II}}, Pacific J. Math. \textbf{69} (1977), no.~1, 177--186. \MR{434938}

\bibitem{dist_psp}
\bysame, \emph{On the distribution of pseudoprimes}, Math. Comp. \textbf{37}
  (1981), no.~156, 587--593. \MR{628717}

\bibitem{pom_self}
Carl Pomerance, J.~L. Selfridge, and Samuel~S. Wagstaff, Jr., \emph{The
  pseudoprimes to {$25\cdot 10^{9}$}}, Math. Comp. \textbf{35} (1980), no.~151,
  1003--1026. \MR{572872 (82g:10030)}

\bibitem{rabin}
Michael~O. Rabin, \emph{Probabilistic algorithm for testing primality}, J.
  Number Theory \textbf{12} (1980), no.~1, 128--138. \MR{566880 (81f:10003)}

\bibitem{sw_old_code}
Andrew Shallue, \emph{Code for tabulating carmichael numbers},
  \url{https://github.com/ashallue/tabulate_car/tree/master}, 2024.

\bibitem{sw_ants2}
Andrew Shallue and Jonathan Webster, \emph{Tabulating {C}armichael numbers {$n
  = Pqr$} with small {P}}, Res. Number Theory \textbf{8} (2022), no.~4, Paper
  No. 84. \MR{4493784}

\bibitem{sw_ants3}
\bysame, \emph{Advances in tabulating {C}armichael numbers}, Res. Number Theory
  \textbf{11} (2025), no.~1, Paper No. 8, 16. \MR{4836747}

\bibitem{sw_new_code}
\bysame, \emph{Carmichael algorithms},
  \url{https://github.com/ashallue/CarmichaelAlgorithms/tree/main}, 2025.

\bibitem{swift}
J.D. Swift, \emph{Review 13 -- table of {C}armichael numbers to {$10^{9}$}},
  Math. Comp. \textbf{29} (1975), no.~129, 338--339.

\bibitem{probpoly}
Samuel~S. Wagstaff, Jr., \emph{Pseudoprimes and {F}ermat numbers}, Integers
  \textbf{24} (2024), Paper No. A23, 10. \MR{4710424}

\bibitem{webster_CN}
Jonathan Webster, \emph{Carmichael numbers: A computational perspective},
  Master's thesis, University of Calgary, 2025.

\bibitem{analog_carm}
H.~C. Williams, \emph{On numbers analogous to the {C}armichael numbers}, Canad.
  Math. Bull. \textbf{20} (1977), no.~1, 133--143. \MR{447099}

\end{thebibliography}

\end{document}